
   \documentclass[twoside,reqno,11pt]{fcaa-var} %


\usepackage{graphicx}
\usepackage{epsfig}
\usepackage{amsthm}
\usepackage{amsmath}
\usepackage{latexsym}
\usepackage{amsfonts}
\usepackage{amssymb}

 \textwidth  12.5cm \textheight 19cm
 \topmargin 0in
 \evensidemargin 1.3cm \oddsidemargin 1.3cm

 \hoffset 0.71cm \voffset 2.10cm
 \baselineskip=18pt
 \parindent=18pt

\newtheoremstyle{theorem}
  {15pt}          
  {15pt}  
  {\sl}  
  {\parindent}
  {\sc}  
  {. }   
  { }    
  {}     
\theoremstyle{theorem}
\newtheorem{lemma}{Lemma}[section]
\newtheorem{theorem}{Theorem}[section]
\newtheorem{corollary}{Corollary}[section]

\newtheoremstyle{defi}
  {15pt}          
  {15pt}  
  {\rm}  
  {\parindent}     
  {\sc}  
  {. }    
  { }    
  {}     
\theoremstyle{defi}
\newtheorem{definition}{Definition}[section]
\newtheorem{remark}{Remark}[section]




 \usepackage{hyperref} 

\usepackage{comment}
\usepackage{bbm}
\usepackage{cleveref}
\newtheorem{property}{Property}[section]

\newcommand{\RN}[1]{%
  \textup{\uppercase\expandafter{\romannumeral#1}}%
  }

 \def\theequation{\arabic{section}.\arabic{equation}}

 

  \setcounter{page}{1}
  \thispagestyle{empty}


 \title[Symmetric Decompositions of $f\in L^2(\mathbb{R}) \cdots$ ]{Symmetric Decompositions of $f\in L^2(\mathbb{R})$ Via Fractional Riemann-Liouville Operators}
 \author[\normalsize  Y. Li]{\normalsize  Yulong Li$^1$}

 \begin{document}

 \vbox to 2.5cm { \vfill }


 \bigskip \medskip

 \begin{abstract}
It is proved that

 given $-1/2<s<1/2$, for any $f\in L^2(\mathbb{R})$, there is a unique $u\in \widehat{H}^{|s|}(\mathbb{R})$ such that
$$
f=\boldsymbol{D}^{-s}u+\boldsymbol{D}^{s*}u\,,
$$
where $\boldsymbol{D}^{-s}, \boldsymbol{D}^{s*}$ are fractional Riemann-Liouville operators and the fractional derivatives are understood in the weak sense.  Furthermore, the regularity of $u$ is discussed, and other versions of the results are established. As an interesting consequence, the Fourier transform of elements of $L^2(\mathbb{R})$ is characterized.

 \medskip

{\it MSC 2010\/}: Primary 26A33;
                  Secondary 34A08, 46N20

 \smallskip

{\it Key Words and Phrases}: Riemann-Liouville fractional operators, weak fractional derivative, Fourier transform, regularity, decomposition, symmetric. 
 \end{abstract}

 \maketitle

 \vspace*{-16pt}



\section{Introduction}
In this material, it is proved that every function $f\in L^2(\mathbb{R})$ could be written as a sum of fractional R-L integral and fractional R-L derivative of a certain function $u$ belonging to classical Sobolev space. The fractional R-L derivative is understood in the week sense which will be defined in section~\ref{section2}. Also other versions of this kind of decomposition will be pointed out. Those results will give author a start and a new point of view to study classical Sobolev space in the context of fractional calculus theory in the subsequent work. The material is organized as follows:
\begin{itemize}
\item Section~\ref{section:Notation} introduces the notations and conventions.
\item Section~\ref{section1} introduces the preliminary knowledge on fractional R-L operators.
\item Section~\ref{section2} introduces the characterization of $\widehat{H}^s(\mathbb{R})$ via R-L derivatives, which has been obtained in our previous work~\cite{V.Ging&Y.Li}.
\item Section~\ref{section3} establishes the main results.
\item Section~\ref{section4} proposes some interesting questions.
\end{itemize}
\section{Notations}\label{section:Notation}
Throughout the material, the following conventions are adopted:
\begin{itemize}
\item All the functions considered in this material are default to be real valued  unless otherwise specified.
\item $(f,g)$ and $ \int_\mathbb{R}fg$ shall be used interchangeably. Also, we denote integration $\int_A f$ on set $A$ without pointing out the variable unless it is necessary to specify.
\item $C_0^\infty(\mathbb{R})$ denotes the space of all infinitely differentiable functions with compact support in $\mathbb{R}$.
\item  $\mathcal{F}(u)$ denotes the Fourier transform of $u$ with specific expression defined in Definition~\ref{def:FT}, $ \widehat{u}$ denotes the Plancherel transform of $u$ defined in Theorem~\ref{thm:PAR}, which is well known that $ \widehat{u}$ is an isometry map from $L^2(\mathbb{R})$ onto $L^2(\mathbb{R})$ and coincides with $\mathcal{F}(u)$ if $u\in L^1(\mathbb{R})\cap L^2(\mathbb{R})$.
\item $u^\vee$ denotes  the inverse of Plancherel transform, and $*$ denotes convolution.
\end{itemize} 
\section{Preliminary}\label{section1} 
\subsection{Fractional Riemann-Liouville Integrals and Their Properties}
\begin{definition} \label{def:RLI}
Let $u:\mathbb{R} \rightarrow \mathbb{R}$ and  $\sigma >0$. The left and right Riemann-Liouville fractional integrals of order $\sigma$ are, formally respectively, defined as
\begin{align}
\boldsymbol{D}^{-\sigma}u(x)&:= \dfrac{1}{\Gamma(\sigma)}\int_{-\infty}^{x}(x-s)^{\sigma -1}u(s) \, {\rm d}s, \label{eq:LRLI}\\
\boldsymbol{D}^{-\sigma * }u(x) &:= \dfrac{1}{\Gamma(\sigma)}\int_{x}^{\infty}(s-x)^{\sigma-1}u(s) \, {\rm d}s,  \label{eq:RRLI}
\end{align}
where $\Gamma(\sigma)$ is the usual Gamma function. 
\end{definition}
\begin{property}[\cite{MR1347689}, p. 96]\label{prp:IntegrationExchange}
Given $0<\sigma$,
\begin{equation}
( \phi, \boldsymbol{D}^{-\sigma}\psi)=( \boldsymbol{D}^{-\sigma*}\phi, \psi),
\end{equation}
for $\phi\in L^p(\mathbb{R})$, $\psi \in L^q(\mathbb{R})$, $p>1, q>1$, $1/p+1/q=1+\sigma$.
\end{property}
\begin{property}[ \cite{MR1347689}, Theorem 7.1, p.138] \label{lem:FTFI} 
Assume that $u \in L^1(\mathbb{R})$ and $0<\sigma<1$, then
 \begin{equation}
  \mathcal{F}(\boldsymbol{D}^{-\sigma} u ) = (2\pi i\xi)^{-\sigma} \mathcal{F}(u) \text{ and }
  \mathcal{F}(\boldsymbol{D}^{-\sigma*}u )= (-2\pi i\xi)^{-\sigma} \mathcal{F}(u),\quad \xi \ne 0,
 \end{equation}
where $\mathcal{F}(\cdot)$ is the Fourier Transform  as defined in Definition~\ref{def:FT} .
 \end{property}

\begin{remark}\label{rem:ComplexPowerFunctions}
The complex power functions are understood as $(\mp i\xi)^{\sigma}=|\xi|^\sigma e^{\mp  \sigma \pi i \cdot \text{sign} (\xi)/2}$.
\end{remark}
\begin{property}[\cite{MR1347689}, pp. 95, 96]\label{pro:Translation}
Let $\mu>0$. Given $h\in \mathbb{R}$, define the translation operator $\tau_h$ as 
$\tau_h u(x) = u(x-h)$. Also, given $\kappa>0$, define the dilation operator
$\Pi_\kappa$ as $\Pi_\kappa u(x) = u(\kappa x)$.
Under the assumption that $\boldsymbol{D}^{-\mu}u$ and $\boldsymbol{D}^{-\mu*}u$ are well-defined, the following is true:
\begin{equation}
\begin{aligned}
\tau_h(\boldsymbol{D}^{-\mu}u)=\boldsymbol{D}^{-\mu}(\tau_hu)&,\quad \tau_h(\boldsymbol{D}^{-\mu*}u)=\boldsymbol{D}^{-\mu*}(\tau_hu)\\
\Pi_\kappa(\boldsymbol{D}^{-\mu}u)=\kappa^\mu \boldsymbol{D}^{-\mu}(\Pi_\kappa u)&,\quad
\Pi_\kappa(\boldsymbol{D}^{-\mu*}u)=\kappa^\mu \boldsymbol{D}^{-\mu*}(\Pi_\kappa u).
\end{aligned}
\end{equation}
\end{property}
\subsection{Fractional Riemann-Liouville Derivatives and Their Properties}
\begin{definition}\label{def:RLD}
Let $u:\mathbb{R} \rightarrow \mathbb{R}$. Assume $\mu >0$, $n$ is the smallest integer greater than $\mu$ (i.e., $n-1 \leq \mu < n$), and $\sigma = n- \mu$. The left and right Riemann-Liouville fractional derivatives of order $\mu$ are, formally respectively, defined as
\begin{align}
\boldsymbol{D}^{\mu} u &:= \dfrac{1}{\Gamma (\sigma)}\dfrac{{\rm d}^n}{{\rm d}x^n}\int_{-\infty}^{x}(x-s)^{\sigma-1}u(s) \, {\rm d}s, \label{5} \\
\boldsymbol{D}^{\mu*}u &:= \dfrac{(-1)^n}{\Gamma(\sigma)}\dfrac{{\rm d}^n}{{\rm d} x^n}\int_{x}^{\infty}(s-x)^{\sigma-1}u(s) \, {\rm d} s.\label{6}
\end{align}
\end{definition}
\begin{property}[\cite{V.Ging&Y.Li}] \label{prop:Boundedness}
Let $0<\mu $ and  $u\in C_0^\infty(\mathbb{R})$, then $\boldsymbol{D}^\mu u, \boldsymbol{D}^{\mu*} u \in L^p(\mathbb{R})$ for any $1\leq p<\infty$.
\end{property}
\begin{property}[ \cite{MR1347689}, p. 137] \label{lem:FTFD}
 Let $\mu > 0, u \in C_0^\infty(\mathbb{R})$, then
 \begin{equation} \label{eq:FTFDo}
 \mathcal{F}(\boldsymbol{D}^{\mu}u) = (2\pi i \xi)^{\mu} \mathcal{F}(u) \text{ and }
 \mathcal{F}(\boldsymbol{D}^{\mu*} u) = (-2\pi i \xi)^{\mu} \mathcal{F}(u), \quad \xi \ne 0, 
\end{equation}
where $\mathcal{F}(\cdot)$ is the Fourier Transform  as defined in Definition~\ref{def:FT} and as in Remark~\ref{lem:FTFI}, the complex power functions are understood as $(\mp i\xi)^{\sigma}=|\xi|^\sigma e^{\mp  \sigma \pi i \cdot \emph{sign} (\xi)/2}$.
\end{property}
\begin{property}[\cite{V.Ging&Y.Li}]\label{pro:TranslationDerivative} Consider $\tau_h$ and $\Pi_\kappa$ defined in Property~\ref{pro:Translation}. Let $\mu>0$, $n-1\leq \mu<n$, where $n$ is a positive integer, then
\begin{equation}
\begin{aligned}
\tau_h(\boldsymbol{D}^{\mu}u)=\boldsymbol{D}^{\mu}(\tau_hu)&,\quad \tau_h(\boldsymbol{D}^{\mu*}u)=\boldsymbol{D}^{\mu*}(\tau_hu)\\
\Pi_\kappa(\boldsymbol{D}^{\mu}u)=\kappa^{-\mu}\boldsymbol{D}^{\mu}(\Pi_\kappa u)&,\quad
\Pi_\kappa(\boldsymbol{D}^{\mu*}u)=\kappa^{-\mu}\boldsymbol{D}^{\mu*}(\Pi_\kappa u).
\end{aligned}
\end{equation}
\end{property}
Now we unify the notations by using $\boldsymbol{D}^\mu u$ and $\boldsymbol{D}^{\mu*} u$ for $\mu \in \mathbb{R}$. Namely, if $0\leq \mu$, they are understood as left and right fractional integrals, if $0<\mu$, as left and right derivatives. We adopt this convention throughout the rest of the material.
\section{Characterization of Sobolev Space $\widehat{H}^s(\mathbb{R})$}\label{section2}
In this section, we shall cite the results from our previous work~\cite{V.Ging&Y.Li}, which characterize the classical Sobolev space $\widehat{H}^s(\mathbb{R})$ defined in~\ref{thm:FTHsR}. This will  be convenient toward the main results in next section.

%
%
%
%
%
%
%
%
 %
 \begin{definition}[Weak Fractional Derivatives\cite{V.Ging&Y.Li}]\label{def:WFD}
 Let $\mu>0$, and $u, w \in L^1_{loc}(\mathbb{R})$.  The function $w$ is called weak
 $\mu$-order left fractional derivative of $u$, written as  $\boldsymbol{D}^\mu u = w$,
 provided
 \begin{equation}
(u, \boldsymbol{D}^{\mu* } \psi) = (w, \psi)~\forall \psi \in C_0^\infty(\mathbb{R}).
 \end{equation}
In a similar faschion, $w$ is weak $\mu$-order right fractional derivative of $u$, written as 
$\boldsymbol{D}^{\mu*} u=w$,  provided
 \begin{equation}
 (u, \boldsymbol{D}^{\mu}\psi) = (w, \psi) ~\forall \psi \in C_0^\infty(\mathbb{R}).
 \end{equation}
 \end{definition}
   \begin{definition}[\cite{V.Ging&Y.Li}]\label{def:FractionalSobolevSpaces}
 Let $s\geq 0$. Define spaces
 \begin{equation}
  \widetilde{W}^{s}_L(\mathbb{R})=\{u\in L^2(\mathbb{R}), \boldsymbol{D}^s u \in L^2(\mathbb{R})\}, 
 \end{equation}
 \begin{equation}
  \widetilde{W}^{s}_R(\mathbb{R})=\{u\in L^2(\mathbb{R}), \boldsymbol{D}^{s*} u\in L^2(\mathbb{R})\}, 
 \end{equation}
where $\boldsymbol{D}^s u$ and $\boldsymbol{D}^{s*} u$ are in the weak fractional derivative sense as defined in Definition~\ref{def:WFD}. A semi-norm
\begin{equation}
|u|_L:= \|\boldsymbol{D}^s u\|_{L^2(\mathbb{R})} ~~\text{for}~~\widetilde{W}^{s}_L(\mathbb{R}) ~~\text{and}~~ |u|_R:= \|\boldsymbol{D}^{s*} u\|_{L^2(\mathbb{R})}
~~\text{for}~~\widetilde{W}^{s}_R(\mathbb{R}),
\end{equation}
is given with the corresponding norm

\begin{equation}
\quad \|u\|_{\star}:=(\|u\|^2_{L^2(\mathbb{R})}+|u|_\star^2)^{1/2},~~\star=L,R.
\end{equation}
 \end{definition}
\begin{remark}\label{rem:Notations}
Notice, by convention,  $\widetilde{W}^{0}_L(\mathbb{R})=\widetilde{W}^{0}_R(\mathbb{R})
=L^2(\mathbb{R})=\widehat{H}^s(\mathbb{R})$. 
\end{remark}
Now we have the following characterization of  Sobolev space $\widehat{H}^s(\mathbb{R})$.
\begin{theorem}[\cite{V.Ging&Y.Li}]\label{thm:EquivalenceOfSpaces}
Given $s\geq 0$, $\widetilde{W}^{s}_L(\mathbb{R})$, $\widetilde{W}^{s}_R(\mathbb{R})$ and $\widehat{H}^s(\mathbb{R})$ are identical spaces with equal norms and semi-norms.
\end{theorem}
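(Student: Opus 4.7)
The case $s=0$ is Remark~\ref{rem:Notations}; from now on assume $s>0$. The whole argument is Fourier-analytic and rests on two facts: Property~\ref{lem:FTFD} (which supplies the multiplier representation $\mathcal{F}(\boldsymbol{D}^s u)=(2\pi i\xi)^s\mathcal{F}(u)$ and $\mathcal{F}(\boldsymbol{D}^{s*}u)=(-2\pi i\xi)^s\mathcal{F}(u)$ for sufficiently regular $u$) and the modulus identity $|(2\pi i\xi)^s|=|(-2\pi i\xi)^s|=(2\pi|\xi|)^s$ that follows from the branch choice in Remark~\ref{rem:ComplexPowerFunctions}. Using the standard Fourier characterization $\widehat{H}^s(\mathbb{R})=\{u\in L^2:(2\pi|\xi|)^s\widehat{u}\in L^2\}$ with semi-norm $\|(2\pi|\xi|)^s\widehat{u}\|_{L^2}$, the goal is to identify both $\widetilde{W}^s_L(\mathbb{R})$ and $\widetilde{W}^s_R(\mathbb{R})$ with this space and to match all three (semi-)norms exactly.

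\textbf{Identification of $\widetilde{W}^s_L$ with $\widehat{H}^s$.} Suppose first that $u\in\widetilde{W}^s_L$ with $w:=\boldsymbol{D}^s u\in L^2$. For each $\psi\in C_0^\infty(\mathbb{R})$, Property~\ref{prop:Boundedness} yields $\boldsymbol{D}^{s*}\psi\in L^2$, so Plancherel legitimately transforms the defining equality of Definition~\ref{def:WFD} into
\[
\int_{\mathbb{R}}\widehat{u}(\xi)\,\overline{(-2\pi i\xi)^s\widehat{\psi}(\xi)}\,d\xi=\int_{\mathbb{R}}\widehat{w}(\xi)\,\overline{\widehat{\psi}(\xi)}\,d\xi.
\]
Using $\overline{(-2\pi i\xi)^s}=(2\pi i\xi)^s$ (checked from Remark~\ref{rem:ComplexPowerFunctions}) and the density of $\{\widehat{\psi}:\psi\in C_0^\infty(\mathbb{R})\}$ in $L^2(\mathbb{R})$, this forces $\widehat{w}(\xi)=(2\pi i\xi)^s\widehat{u}(\xi)$ a.e., so $(2\pi|\xi|)^s\widehat{u}\in L^2$, i.e.\ $u\in\widehat{H}^s$, with $|u|_L=\|w\|_{L^2}=\|(2\pi|\xi|)^s\widehat{u}\|_{L^2}$. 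Conversely, if $u\in\widehat{H}^s$, then $(2\pi i\xi)^s\widehat{u}\in L^2$; let $w$ be its inverse Plancherel transform and run the same computation backwards to verify $(u,\boldsymbol{D}^{s*}\psi)=(w,\psi)$ for every $\psi\in C_0^\infty(\mathbb{R})$, placing $u$ in $\widetilde{W}^s_L$ with the same semi-norm. The analysis of $\widetilde{W}^s_R$ is word-for-word the same with $\boldsymbol{D}^{s*}$ and $(-2\pi i\xi)^s$ in place of $\boldsymbol{D}^s$ and $(2\pi i\xi)^s$; because the two multipliers share the same modulus, the semi-norm $|u|_R$ coincides with $|u|_L$ and with the $\widehat{H}^s$ semi-norm, and then the full norms $\|\cdot\|_L,\|\cdot\|_R,\|\cdot\|_{\widehat{H}^s}$ coincide as well.

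\textbf{Main obstacle.} The substance of the proof is Fourier-analytic bookkeeping: conjugating the complex multiplier via Remark~\ref{rem:ComplexPowerFunctions}, invoking Property~\ref{prop:Boundedness} so that Plancherel applies to the pairing $(u,\boldsymbol{D}^{s*}\psi)$, and using density of $\mathcal{F}(C_0^\infty(\mathbb{R}))$ in $L^2(\mathbb{R})$ to upgrade weak identities into pointwise equalities on the frequency side. Once these technicalities are dispatched, all three spaces collapse onto the single Fourier space $\{u\in L^2:(2\pi|\xi|)^s\widehat{u}\in L^2\}$ with identical norms and semi-norms.
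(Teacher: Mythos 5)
A preliminary remark: the paper does not prove Theorem~\ref{thm:EquivalenceOfSpaces} at all --- it is imported verbatim from the companion work \cite{V.Ging&Y.Li} --- so there is no in-paper argument to compare yours against. On its merits, your Fourier-multiplier proof is the natural one and is essentially sound: Parseval together with Property~\ref{lem:FTFD} turns the weak-derivative identity into $\int_{\mathbb{R}}\bigl((2\pi i\xi)^s\widehat{u}-\widehat{w}\bigr)\overline{\widehat{\psi}}\,{\rm d}\xi=0$, the branch convention of Remark~\ref{rem:ComplexPowerFunctions} gives $\overline{(-2\pi i\xi)^s}=(2\pi i\xi)^s$ and $|(2\pi i\xi)^s|=|2\pi\xi|^s$, and the three (semi-)norms then coincide exactly. (The cited work organizes the argument slightly differently, through density of $C_0^\infty(\mathbb{R})$ and completion --- this is what Corollary~\ref{cor:DensityOfC} records --- but the substance is the same multiplier computation on test functions.)

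One step needs repair. You conclude $\widehat{w}=(2\pi i\xi)^s\widehat{u}$ a.e.\ from ``density of $\{\widehat{\psi}:\psi\in C_0^\infty(\mathbb{R})\}$ in $L^2(\mathbb{R})$,'' but that orthogonality argument requires the function $g:=(2\pi i\xi)^s\widehat{u}-\widehat{w}$ to already lie in $L^2(\mathbb{R})$, which is precisely what is being proved: $\widehat{w}\in L^2(\mathbb{R})$, but $(2\pi i\xi)^s\widehat{u}$ is a priori only locally square-integrable with polynomially controlled growth, and the pairing $\int g\overline{\widehat{\psi}}$ converges only because $\widehat{\psi}$ is a Schwartz function. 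The fix is standard: $g$ is a tempered distribution, $\mathcal{F}(C_0^\infty(\mathbb{R}))$ is dense in the Schwartz class $\mathcal{S}(\mathbb{R})$ (since $\mathcal{F}$ is a topological isomorphism of $\mathcal{S}$ and $C_0^\infty$ is dense in $\mathcal{S}$), hence $g=0$ as a distribution and therefore a.e. With that patch, plus the routine check that the $w$ you construct in the converse direction is real-valued (i.e.\ $\overline{\widehat{w}(-\xi)}=\widehat{w}(\xi)$, which follows from the same branch convention and the realness of $u$), the proof is complete.
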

As a consequence, we have the following convenient result which will be of use in next section.
\begin{corollary}[\cite{V.Ging&Y.Li}]\label{cor:DensityOfC}
$u\in \widehat{H}^{s}(\mathbb{R})$ if and only if there exits a  sequence $\{u_n\}\subset C_0^\infty(\mathbb{R})$ such that $\{u_n\}, \{\boldsymbol{D}^su_n\}$ are Cauchy sequences in $L^2(\mathbb{R})$, with $\lim_{n\rightarrow \infty}u_n=u$. As a consequence, we have  $\lim_{n\rightarrow \infty}\boldsymbol{D}^{s}u_n=\boldsymbol{D}^{s}u$.
\\
Likewise,
\\
$u\in \widehat{H}^{s}(\mathbb{R}) $ if and only if there exits a  sequence $\{u_n\}\subset C_0^\infty(\mathbb{R})$ such that $\{u_n\}, \{\boldsymbol{D}^{s*}u_n\}$ are Cauchy sequences in $L^2(\mathbb{R})$, with $\lim_{n\rightarrow \infty}u_n=u$. As a consequence, we have $\lim_{n\rightarrow\infty}\boldsymbol{D}^{s*}u_n=\boldsymbol{D}^{s*}u$.
\end{corollary}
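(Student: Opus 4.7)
The plan is to establish both implications in the left-derivative version; the right-derivative case is completely symmetric. Throughout, I identify $\widehat{H}^s(\mathbb{R})$ with $\widetilde{W}^s_L(\mathbb{R})$ (equipped with the equivalent norm $\|\cdot\|_L$) via Theorem~\ref{thm:EquivalenceOfSpaces}, so convergence of $\{u_n\}$ and $\{\boldsymbol{D}^s u_n\}$ in $L^2$ is equivalent to convergence of $\{u_n\}$ in $\|\cdot\|_L$.

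For the backward direction, suppose $\{u_n\}\subset C_0^\infty(\mathbb{R})$ satisfies $u_n\to u$ and $\boldsymbol{D}^s u_n\to w$ in $L^2(\mathbb{R})$. Then $u\in L^2(\mathbb{R})$ as an $L^2$-limit, and I would identify $w$ as the weak left fractional derivative of $u$ per Definition~\ref{def:WFD}. Fix $\psi\in C_0^\infty(\mathbb{R})$. For each $n$ a Fubini-type computation using Definitions~\ref{def:RLI}--\ref{def:RLD} (equivalently, Property~\ref{prp:IntegrationExchange} applied after writing $\boldsymbol{D}^s$ as an integer derivative of a fractional integral) yields
\[
(u_n,\boldsymbol{D}^{s*}\psi)=(\boldsymbol{D}^s u_n,\psi).
\]
By Property~\ref{prop:Boundedness}, $\boldsymbol{D}^{s*}\psi\in L^2(\mathbb{R})$, so the left-hand side tends to $(u,\boldsymbol{D}^{s*}\psi)$ and the right-hand side to $(w,\psi)$. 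Hence $w=\boldsymbol{D}^s u$ weakly and $u\in \widetilde{W}^s_L(\mathbb{R})=\widehat{H}^s(\mathbb{R})$. In particular the ``as a consequence'' clause follows, since the $L^2$-limit $w$ is forced to coincide with $\boldsymbol{D}^s u$.

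For the forward direction, given $u\in\widetilde{W}^s_L(\mathbb{R})$, I would approximate $u$ in $\|\cdot\|_L$ by the standard cutoff-and-mollify recipe. Choose a mollifier family $\eta_\varepsilon\in C_0^\infty(\mathbb{R})$ and cutoffs $\chi_R\in C_0^\infty(\mathbb{R})$ with $\chi_R\equiv 1$ on $[-R,R]$, and set $u_{R,\varepsilon}:=\eta_\varepsilon\ast(\chi_R u)\in C_0^\infty(\mathbb{R})$. The ingredients I would use are: (a) mollification commutes with the weak operator, $\boldsymbol{D}^s(\eta_\varepsilon\ast v)=\eta_\varepsilon\ast\boldsymbol{D}^s v$, which follows on the Fourier side via Property~\ref{lem:FTFD}; (b) $\eta_\varepsilon\ast v\to v$ in $L^2$ for every $v\in L^2$; and (c) $\chi_R u\to u$ and $\boldsymbol{D}^s(\chi_R u)\to\boldsymbol{D}^s u$ in $L^2$ as $R\to\infty$.

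The main obstacle is (c). Because $\boldsymbol{D}^s$ is nonlocal, there is no clean product rule for $\chi_R u$, and directly splitting the defining integral of Definition~\ref{def:RLD} produces remainder terms that are awkward to control uniformly in $R$. The cleanest workaround is to pass to the Fourier side: the space $\widehat{H}^s(\mathbb{R})$ is the Bessel-potential space characterized by $(1+|\xi|^2)^{s/2}\widehat{u}\in L^2$, where density of $C_0^\infty(\mathbb{R})$ is a textbook fact (truncate and mollify $\widehat{u}$, then invert). Theorem~\ref{thm:EquivalenceOfSpaces} says this Fourier-based norm is equivalent to $\|\cdot\|_L$, and therefore $C_0^\infty(\mathbb{R})$ is dense in $\widetilde{W}^s_L(\mathbb{R})$ under $\|\cdot\|_L$, furnishing the required sequence $\{u_n\}$ and completing the proof.
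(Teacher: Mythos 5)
Your argument is correct and amounts to the derivation the paper intends: Corollary~\ref{cor:DensityOfC} is stated without proof as an import from \cite{V.Ging&Y.Li}, presented as an immediate consequence of Theorem~\ref{thm:EquivalenceOfSpaces} combined with the classical density of $C_0^\infty(\mathbb{R})$ in $\widehat{H}^s(\mathbb{R})$ --- exactly where your forward direction lands after you (rightly) abandon the cutoff-and-mollify route, while your backward direction is the standard integration-by-parts identification of the weak derivative. One small quibble: truncating and mollifying $\widehat{u}$ and inverting yields Schwartz-class (indeed entire, by Paley--Wiener) approximants rather than compactly supported ones, so a further cutoff in physical space is needed; but the density fact you invoke is standard and the proof stands.
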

\section{Main Results} \label{section3}

In this section, under weak fractional derivative sense defined in Section~\ref{section2}, the following result will be established:
\begin{theorem}\label{MainTheorem1}
(1).
Given $ -1/2<s<1/2$, for $\forall$ $f\in L^2(\mathbb{R})$, there is a unique $u\in \widehat{H}^{|s|}(\mathbb{R})$ such that the following  decomposition holds:
\begin{equation}
f=\boldsymbol{D}^{-s}u+\boldsymbol{D}^{s*}u.
\end{equation}
Furthermore,  $f\in \widehat{H}^t(\mathbb{R})$ iff $u\in \widehat{H}^{s+t}(\mathbb{R})$, where $0<t$.
\\
(2). Given $ -1/2<s<1/2$, for any $f\in L^2(\mathbb{R})$, there exists a unique $u\in \widehat{H}^{|s|}(\mathbb{R})$ such that the following  decomposition holds:
\begin{equation}
f=\boldsymbol{D}^{-s}u+\boldsymbol{D}^{s}u.
\end{equation}
Furthermore,  $f\in \widehat{H}^t(\mathbb{R})$ iff $u\in \widehat{H}^{s+t}(\mathbb{R})$, where $0<t$.
\end{theorem}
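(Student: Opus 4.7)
The plan is to reduce the equation to a pointwise multiplier identity on the Fourier side. Taking the Plancherel transform of the candidate equation $f = \boldsymbol{D}^{-s}u + \boldsymbol{D}^{s*}u$ and using Properties~\ref{lem:FTFI} and~\ref{lem:FTFD} (applied first to $C_0^\infty$ approximations furnished by Corollary~\ref{cor:DensityOfC}), the equation becomes
\begin{equation*}
\widehat{f}(\xi) \;=\; m_s(\xi)\,\widehat{u}(\xi), \qquad m_s(\xi) := (2\pi i\xi)^{-s} + (-2\pi i\xi)^{s}.
\end{equation*}
The whole theorem then reduces to pointwise bounds on the symbol $m_s$, and part~(2) is handled in exactly the same way with $m_s$ replaced by $n_s(\xi) := (2\pi i\xi)^{-s} + (2\pi i\xi)^{s}$.

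Using the branch convention of Remark~\ref{rem:ComplexPowerFunctions}, the two summands of $m_s$ happen to share the common phase $e^{-is\pi\,\mathrm{sign}(\xi)/2}$, yielding the clean identity $|m_s(\xi)| = (2\pi|\xi|)^{-s} + (2\pi|\xi|)^{s}$. For $n_s$ a direct computation gives $|n_s(\xi)|^2 = (2\pi|\xi|)^{-2s} + (2\pi|\xi|)^{2s} + 2\cos(s\pi)$, and the hypothesis $-1/2 < s < 1/2$ is exactly what forces $\cos(s\pi) > 0$. In either case the symbol is strictly positive on $\mathbb{R}\setminus\{0\}$ and obeys $\asymp |\xi|^{-|s|}$ near the origin and $\asymp |\xi|^{|s|}$ at infinity. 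Setting $\widehat{u}(\xi) := \widehat{f}(\xi)/m_s(\xi)$ and inverting via Plancherel therefore produces a unique $u$ with $\widehat{u}\in L^2$ and $|\xi|^{|s|}\widehat{u}\in L^2$, i.e., $u \in \widehat{H}^{|s|}(\mathbb{R})$; uniqueness is immediate from the a.e.\ non-vanishing of $m_s$.

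To promote this Fourier-level identity into the weak fractional derivative identity of Definition~\ref{def:WFD}, I would approximate $u$ by a sequence $\{u_n\}\subset C_0^\infty(\mathbb{R})$ via Corollary~\ref{cor:DensityOfC}, for which $\boldsymbol{D}^{-s}u_n + \boldsymbol{D}^{s*}u_n$ is classically well-defined and Fourier-transforms to $m_s\widehat{u_n}$; passing to the $L^2$-limit, using the Plancherel isometry together with the pairing of Property~\ref{prp:IntegrationExchange} on the test-function side, yields the decomposition. The regularity claim follows from the same symbol bounds: for $t>0$, $f\in\widehat{H}^t$ is equivalent to $|\xi|^t m_s(\xi)\widehat{u}\in L^2$, which by $|m_s|\asymp |\xi|^{|s|}$ at infinity is equivalent to $u\in\widehat{H}^{|s|+t}$. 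The main technical obstacle is exactly this last translation step: because the character of $\boldsymbol{D}^{-s}$ and $\boldsymbol{D}^{s*}$ (fractional integral versus weak derivative) flips depending on the sign of $s$, the approximation and pairing arguments may have to be split into sign cases so that each summand converges in the intended space.
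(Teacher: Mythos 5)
Your proposal is correct in substance but takes a genuinely different route from the paper. The paper never divides by the symbol: it proves that the set $M=\{\boldsymbol{D}^{-s}\psi+\boldsymbol{D}^{s*}\psi:\psi\in C_0^\infty(\mathbb{R})\}$ is dense in $L^2(\mathbb{R})$ via a cross-correlation argument, then uses the Parseval identity $(\boldsymbol{D}^{-s}\psi,\boldsymbol{D}^{s*}\psi)=\|\psi\|^2_{L^2(\mathbb{R})}$ (respectively $\cos(s\pi)\|\psi\|^2$) to get the coercive norm identity $\|\boldsymbol{D}^{-s}\psi+\boldsymbol{D}^{s*}\psi\|^2=\|\boldsymbol{D}^{-s}\psi\|^2+\|\boldsymbol{D}^{s*}\psi\|^2+2\|\psi\|^2$, extracts Cauchy sequences from an approximating sequence of $f$, and identifies the limits; the regularity claim is then obtained by a separate bootstrapping argument in the scale $\widetilde{W}^t_L,\widetilde{W}^t_R$. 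Your multiplier computation is the same mechanism in disguise — the common phase $e^{-is\pi\,\mathrm{sign}(\xi)/2}$ of the two summands is exactly why the cross term in the paper's Lemma on the inner product equals $\|\psi\|^2$ — but solving $\widehat{u}=\widehat{f}/m_s$ directly is constructive, yields uniqueness and the regularity equivalence immediately from the two-sided bounds $|m_s|\geq 2$ and $|m_s(\xi)|\asymp|\xi|^{|s|}$ at infinity, and in effect proves the paper's Corollary first and the Theorem from it, reversing the paper's logical order. Two cautions. First, the translation step you flag is the real content: you must show that your $u$ satisfies $\widehat{\boldsymbol{D}^{s*}u}=(-2\pi i\xi)^{s}\widehat{u}$ with $\boldsymbol{D}^{s*}u$ the weak derivative (via Corollary~\ref{cor:DensityOfC} and Property~\ref{lem:FTFD}, passing to an a.e.\ convergent subsequence), and that the R--L integral $\boldsymbol{D}^{-s}u$ of the $L^2$ function $u$ really has symbol $(2\pi i\xi)^{-s}$; for the latter do not try to pass $\boldsymbol{D}^{-s}$ through the $L^2$ limit of the approximating sequence (the factor $|\xi|^{-s}$ is unbounded near $\xi=0$, so $\boldsymbol{D}^{-s}u_n$ need not converge in $L^2$), but instead identify it by duality against test functions using Property~\ref{prp:IntegrationExchange}, which is what the paper does in its Step 1. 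Second, your symbol asymptotics give $f\in\widehat{H}^t(\mathbb{R})$ iff $u\in\widehat{H}^{|s|+t}(\mathbb{R})$, which agrees with the stated exponent $s+t$ only for $s\geq 0$; since the paper proves only the case $0<s<1/2$ and asserts the rest by symmetry, your version of the exponent is the one that survives for negative $s$, and this is worth stating explicitly rather than treating as a typo to be absorbed.
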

\begin{remark}
It is worth noticing that, if $s$ is positive, $\boldsymbol{D}^{s*}u, \boldsymbol{D}^{s}u $  are understood as the weak fractional derivative of $u$ and $\boldsymbol{D}^{-s}u, \boldsymbol{D}^{-s*}u$ are understood in the usual sense, namely, the R-L integrals of $u$. Also, by similar arguments, we could also derive other variants or generalizations of Theorem~\ref{MainTheorem1}, such as:
\begin{equation}
 f=p\boldsymbol{D}^{-s}u+q\boldsymbol{D}^{s*}u, 
\end{equation}
where $p,q\in \mathbb{R}$ are suitable numbers (for example, $p>0, q>0$).

In the following, we focus only on Theorem~\ref{MainTheorem1}. 
\end{remark}
\subsection{Several Lemmas}
Toward the proof of Theorem~\ref{MainTheorem1} several necessary lemmas are first established. However, first we point out that the proof for the case $0<s<1/2$ and the case $-1/2<s\leq 0$ in Theorem~\ref{MainTheorem1} have no essential differences, simply because the sign change of $s$ results only in the exchange of notations of derivatives and integrals. For simplicity and without loss of generality, in the following we establish the proof of Theorem~\ref{MainTheorem1} only for the case $0<s<1/2$, and the proof for the case $-1/2<s\leq 0$ follows analogously without essential obstacle.
 \begin{lemma}\label{lem:dense}
Given $0<s<1/2$, then the set $M=\{w: w=\boldsymbol{D}^{-s}\psi+\boldsymbol{D}^{s*}\psi, ~\forall \psi\in C_0^\infty(\mathbb{R})$\} is dense in $L^{2}(\mathbb{R})$. Similarly,  the set
$\widetilde{M}=\{w: w=\boldsymbol{D}^{-s}\psi+\boldsymbol{D}^{s}\psi, ~\forall \psi\in C_0^\infty(\mathbb{R})$\} is dense in $L^{2}(\mathbb{R})$ also. 
\end{lemma}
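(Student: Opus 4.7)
The plan is to establish density via duality: it suffices to show that if $f\in L^2(\mathbb{R})$ satisfies $(f,w)=0$ for every $w\in M$, then $f=0$, and likewise for $\widetilde{M}$. The natural tool is the Fourier transform, since by Property~\ref{lem:FTFI} and Property~\ref{lem:FTFD} the operators $\boldsymbol{D}^{-s}$, $\boldsymbol{D}^{s*}$, $\boldsymbol{D}^{s}$ become multiplications by the explicit symbols $(2\pi i\xi)^{-s}$, $(-2\pi i\xi)^{s}$ and $(2\pi i\xi)^{s}$, respectively, together with $\mathcal{F}(\psi)$.

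First I would verify that for $0<s<1/2$ and $\psi\in C_0^\infty(\mathbb{R})$ both $\boldsymbol{D}^{-s}\psi$ and $\boldsymbol{D}^{s*}\psi$ lie in $L^{2}(\mathbb{R})$: the former by a direct tail estimate $\boldsymbol{D}^{-s}\psi(x)=O(|x|^{s-1})$ combined with $s<1/2$, and the latter by Property~\ref{prop:Boundedness}. Applying Plancherel to the orthogonality condition $\int f(\boldsymbol{D}^{-s}\psi+\boldsymbol{D}^{s*}\psi)=0$ transforms it into
\[
\int_{\mathbb{R}}\widehat{f}(\xi)\,\overline{m(\xi)\widehat{\psi}(\xi)}\,d\xi=0,\qquad \psi\in C_0^\infty(\mathbb{R}),
\]
where $m(\xi):=(2\pi i\xi)^{-s}+(-2\pi i\xi)^{s}$. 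Using Remark~\ref{rem:ComplexPowerFunctions}, one computes
\[
m(\xi)=e^{-is\pi\,\mathrm{sign}(\xi)/2}\bigl[(2\pi|\xi|)^{-s}+(2\pi|\xi|)^{s}\bigr],
\]
so $|m(\xi)|\ge 2$ for all $\xi\ne 0$ by AM--GM, in particular $m$ is a.e. nonvanishing.

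The next step is to conclude that $\widehat{f}\,\overline{m}=0$ a.e. The issue is that $m$ is unbounded near both $0$ (like $|\xi|^{-s}$) and $\infty$ (like $|\xi|^{s}$), so $\widehat{f}\,\overline{m}$ need not be in $L^{2}$. However, the bound $|m(\xi)|\lesssim |\xi|^{-s}+|\xi|^{s}$, together with $s<1/2$ ensuring $|\xi|^{-2s}$ is locally integrable, shows that $G:=\widehat{f}\,\overline{m}\in L^{1}_{\mathrm{loc}}(\mathbb{R})$ defines a tempered distribution through the estimate $\|m\varphi\|_{L^2}\lesssim\|\widehat{f}\|_{L^2}\cdot(\text{Schwartz seminorms of }\varphi)$. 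After a reflection $\xi\mapsto -\xi$ to dispose of the complex conjugate on $\widehat{\psi}$ (using that $\psi$ is real), the hypothesis reads $\langle G,\widehat{\psi}\rangle=0$ for every $\psi\in C_0^\infty(\mathbb{R})$, which by the definition of Fourier transform on $\mathcal{S}'$ gives $\langle\widehat{G},\psi\rangle=0$ on all of $C_0^\infty(\mathbb{R})$. Hence $\widehat{G}=0$ in $\mathcal{S}'$, so $G=0$ a.e., and since $m\neq 0$ a.e., we obtain $\widehat{f}=0$, i.e.\ $f=0$.

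The $\widetilde{M}$ statement is proved identically, the only new point being to check that the corresponding symbol $\widetilde{m}(\xi)=(2\pi i\xi)^{-s}+(2\pi i\xi)^{s}$ is also a.e. nonzero; writing $r=2\pi|\xi|$ and $\theta=s\pi/2$, one finds
\[
\widetilde{m}(\xi)=(r^{-s}+r^{s})\cos\theta+i(r^{s}-r^{-s})\sin\theta\,\mathrm{sign}(\xi),
\]
whose real part is strictly positive for $s\in(0,1/2)$. The main obstacle in the whole argument is precisely this unboundedness of the multiplier, which prevents a one-line Plancherel argument; the tempered-distribution framework is what makes the duality close.
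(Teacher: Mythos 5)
Your proof is correct, but it closes the duality argument by a genuinely different mechanism than the paper. Both proofs reduce density to showing that $g\perp M$ forces $g=0$, and both rest on the Fourier symbols of the Riemann--Liouville operators; the divergence is in how one gets from $(g,\boldsymbol{D}^{-s}\psi+\boldsymbol{D}^{s*}\psi)=0$ to $\widehat{g}=0$. You transform the orthogonality relation directly, obtaining $\int\widehat{g}\,\overline{m\widehat{\psi}}=0$ with $m(\xi)=(2\pi i\xi)^{-s}+(-2\pi i\xi)^{s}$, and then must confront the fact that $m$ is unbounded at $0$ and at $\infty$; you resolve this by viewing $\widehat{g}\,\overline{m}$ as a tempered distribution (using $2s<1$ for local integrability of $|\xi|^{-2s}$) and invoking injectivity of the Fourier transform on $\mathcal{S}'$. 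The paper instead avoids the unbounded multiplier entirely: it exploits the translation invariance of $M$ (Properties~\ref{pro:Translation} and~\ref{pro:TranslationDerivative}) to deduce that the cross-correlation $y\mapsto\int g(x)\,w(x-y)\,{\rm d}x$ vanishes identically, applies the convolution theorem to a product of two honest $L^2$ functions to get $\widehat{g(-\cdot)}\cdot\widehat{w}=0$ a.e., and then uses the dilation family $v=\varphi(\epsilon\,\cdot)$ to make the set where $\widehat{w}\neq 0$ sweep out arbitrary intervals. Your route buys a sharper statement along the way --- the explicit factorization $m(\xi)=e^{-is\pi\,\mathrm{sign}(\xi)/2}\bigl[(2\pi|\xi|)^{-s}+(2\pi|\xi|)^{s}\bigr]$ and the uniform bound $|m|\geq 2$, which is strictly stronger than the paper's observation that the two summands have different moduli a.e.\ (and which prefigures the identity of Lemma~\ref{normestimate}) --- at the cost of importing the $\mathcal{S}'$ machinery; the paper's argument is more elementary but relies on the translation/dilation structure of $M$. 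One presentational caveat: after the reflection $\xi\mapsto-\xi$ you should pair $\widehat{\psi}$ against $G(-\cdot)$ rather than $G$ itself, and note that a tempered distribution vanishing on $C_0^\infty(\mathbb{R})$ vanishes on all of $\mathcal{S}$ by density; neither point affects the validity of the argument. Both proofs share the same tacit step of extending Property~\ref{lem:FTFI} to the Plancherel transform when $\boldsymbol{D}^{-s}\psi\in L^2\setminus L^1$, so you are not worse off there.
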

\begin{proof}
The proof shall be established by invoking the theorem in \ref{Thm:dense}.
First we check the conditions for applying the theorem, and consider the set $M$.

 Since $0<s<1/2$, for $\psi \in C^\infty_0(\mathbb{R})$, it is true that $\boldsymbol{D}^{-s}\psi\in L^2(\mathbb{R})$ by applying Theorem 5.3  (\cite{MR1347689}, p. 103). By  Property~\ref{prop:Boundedness} we have $\boldsymbol{D}^{s*}\psi\in L^p(\mathbb{R}), p\geq 1$, thus  $M\subset L^2(\mathbb{R})$. Then it could be directly verified that $M$  is a subspace of $L^2(\mathbb{R})$ by checking closeness of addition and scalar multiplication.
 
Therefore all conditions are met to allow the utilization of \ref{Thm:dense}. Suppose now $g\in L^2(\mathbb{R})$ such that
$(g,w) = 0 ~\text{ for any } w\in M$.
The density of $M$ is confirmed if this last equation implies that $g = 0$.

Pick $0 \ne \varphi \in C_0^\infty(\mathbb{R})$, which is possible. Then  Plancherel Theorem in \ref{thm:PAR} gives $\widehat{\varphi}\neq 0$. On the account of continuity of $\widehat{\varphi}$, there exists a non-empty interval $(a,b)\subset \mathbb{R}$ such that $\widehat{\varphi}\neq 0$ on $(a,b)$.

Now set $v(x)=\varphi(\epsilon x)$,  with any fixed $\epsilon>0$. It is clear that $v \in C_0^\infty(\mathbb{R})$. 
 Using Property~\ref{lem:FTFI} and Property~\ref{lem:FTFD} (Fourier transform properties) for $w=\boldsymbol{D}^{-s}v+\boldsymbol{D}^{s*}v$ gives 
\begin{equation}\label{equ:WWW}
\widehat{w}(\xi)= \widehat{v}(\xi)\left((2\pi i \xi)^{-s}+(-2\pi i\xi)^s \right) = \dfrac{1}{\xi}\widehat{\psi}(\dfrac{\xi}{\epsilon})\left((2\pi i \xi)^{-s}+(-2\pi i\xi)^s \right).
\end{equation}
Following the practice stated in Remark~\ref{lem:FTFI}, it is easy to see that $(2\pi i \xi)^{-s}+(-2\pi i\xi)^s\neq 0$ a.e. on $\mathbb{R}$ by observing that $|(2\pi i \xi)^{-s}|\neq |(-2\pi i\xi)^s| $ a.e.. 
This implies $\widehat{w} \neq 0$ a.e. on $(\epsilon a, \epsilon b)$ since $\widehat{\psi}(\dfrac{\xi}{\epsilon})\neq 0$ a.e. on $(a\epsilon, b\epsilon)$. 
 
Now for $y\in \mathbb{R}$, we set the cross-correlation function
$$
G(-y) :=\int_\mathbb{R} g(x)w(x-y)\, {\rm d}x = \int_\mathbb{R} g(x)\, \tau_y w(x) \, {\rm d}x.
$$ 
Using Property~\ref{pro:Translation} and ~\ref{pro:TranslationDerivative} gives $\tau_y w\in M$ and thus $G(-y)=0$ for every $y\in \mathbb{R}$ by our assumption. Therefore by Plancherel Theorem $\widehat{G}\neq 0$. Notice  $G(y)=g(-x)*w(x)$. Since $g,w\in L^2(\mathbb{R})$, using convolution theorem (\cite{MR2218073}, Theorem 1.2, p.12) gives $\widehat{G}=\widehat{g(-x)}\cdot\widehat{w(x)}=0$.

Since  $\overline{\widehat{w}} \neq 0$ a.e. on $(\epsilon a, \epsilon b)$, it is concluded that  $\widehat{g(-x)}=0$ a.e.  on $(\epsilon a,\epsilon b)$.  Because $\epsilon>0$ is arbitrary, $\widehat{g(-x)}=0$ on any open interval, and thus $\widehat{g}=0$ on $\mathbb{R}$. Another use of Plancherel Theorem in \ref{thm:PAR} concludes that $g=0$ on $\mathbb{R}$, implying the density of $M$ in $L^2(\mathbb{R})$.

Finally, the same conclusion is true for $\widetilde{M}$ by repeating the similar foregoing calculation without essential difference.
 \end{proof}
 \begin{lemma}\label{singlenormestimate}
 Given $0<s<1/2, \psi\in C_0^\infty(\mathbb{R})$, then
 \begin{equation}
 (\boldsymbol{D}^{-s}\psi, \boldsymbol{D}^{s*}\psi)=\|\psi\|^2_{L^2(\mathbb{R})},\quad (\boldsymbol{D}^{-s}\psi, \boldsymbol{D}^{s}\psi)=\cos(s\pi)\|\psi\|^2_{L^2(\mathbb{R})}.
 \end{equation}
 \end{lemma}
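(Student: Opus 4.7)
The plan is to move everything to the Fourier side via the Plancherel/Parseval identity and compute the resulting symbols using the complex-power convention in Remark~\ref{rem:ComplexPowerFunctions}. Since $\psi\in C_0^\infty(\mathbb{R})\subset L^1(\mathbb{R})\cap L^2(\mathbb{R})$, Property~\ref{lem:FTFI} and Property~\ref{lem:FTFD} apply and give
\[
\mathcal{F}(\boldsymbol{D}^{-s}\psi)=(2\pi i\xi)^{-s}\widehat{\psi},\qquad \mathcal{F}(\boldsymbol{D}^{s*}\psi)=(-2\pi i\xi)^{s}\widehat{\psi},\qquad \mathcal{F}(\boldsymbol{D}^{s}\psi)=(2\pi i\xi)^{s}\widehat{\psi},
\]
each in $L^2(\mathbb{R})$ for $0<s<1/2$ (using Property~\ref{prop:Boundedness} and the integrability afforded by $0<s<1/2$). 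Applied to the real-valued inner product $(f,g)=\int fg$, Parseval reads $(f,g)=\int \mathcal{F}(f)\,\overline{\mathcal{F}(g)}\,d\xi$.

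For the first identity, I would compute the Fourier symbol of the integrand. Writing $(2\pi i\xi)^{-s}=(2\pi|\xi|)^{-s}e^{-is\pi\,\mathrm{sign}(\xi)/2}$ and $(-2\pi i\xi)^{s}=(2\pi|\xi|)^{s}e^{-is\pi\,\mathrm{sign}(\xi)/2}$, the product $(2\pi i\xi)^{-s}\,\overline{(-2\pi i\xi)^{s}}$ telescopes to $1$. Hence
\[
(\boldsymbol{D}^{-s}\psi,\boldsymbol{D}^{s*}\psi)=\int_{\mathbb{R}}|\widehat{\psi}(\xi)|^2\,d\xi=\|\psi\|_{L^2(\mathbb{R})}^2.
\]

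For the second identity the computation is almost the same but now $(2\pi i\xi)^{s}=(2\pi|\xi|)^{s}e^{+is\pi\,\mathrm{sign}(\xi)/2}$, so
\[
(2\pi i\xi)^{-s}\,\overline{(2\pi i\xi)^{s}}=e^{-is\pi\,\mathrm{sign}(\xi)}=\cos(s\pi)-i\sin(s\pi)\,\mathrm{sign}(\xi).
\]
Here the key observation is that because $\psi$ is real-valued, $\widehat{\psi}(-\xi)=\overline{\widehat{\psi}(\xi)}$, so $|\widehat{\psi}|^2$ is an even function of $\xi$. The factor $\sin(s\pi)\,\mathrm{sign}(\xi)$ is odd, so its contribution to the integral vanishes, while $\cos(s\pi)$ is constant. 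This yields $(\boldsymbol{D}^{-s}\psi,\boldsymbol{D}^{s}\psi)=\cos(s\pi)\,\|\psi\|_{L^2(\mathbb{R})}^2$.

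The only delicate point is the bookkeeping of the branches $(2\pi i\xi)^{\pm s}$ and $(-2\pi i\xi)^{s}$ under the convention of Remark~\ref{rem:ComplexPowerFunctions}; everything else is a direct Parseval computation with an evenness/oddness argument to discard the imaginary part in the second case. No estimates, no density/approximation step is needed since $\psi\in C_0^\infty(\mathbb{R})$ makes every quantity classical.
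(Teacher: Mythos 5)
Your proposal is correct and follows essentially the same route as the paper: pass to the Fourier side via Parseval, use the transform formulas for the Riemann--Liouville operators, and evaluate the resulting symbols under the convention of Remark~\ref{rem:ComplexPowerFunctions}, with the real-valuedness of $\psi$ (hence evenness of $|\widehat{\psi}|^2$) killing the odd $\sin(s\pi)\,\mathrm{sign}(\xi)$ term in the second identity. The only cosmetic difference is that the paper splits the integral by the sign of $\xi$ whereas you invoke an even/odd cancellation, which is the same computation.
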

 \begin{proof}
 Consider the first equality.
 
 From the proof of Lemma~\ref{lem:dense}, we already know, for $0<s<1/2$, $\boldsymbol{D}^{-s}\psi \in L^2(\mathbb{R})$ and $\boldsymbol{D}^{s*}\psi, \boldsymbol{D}^{s}\psi \in L^p(\mathbb{R}), p\geq 1$. This allows us to use Parseval Formula ~\ref{thm:ParsevalFormula} and Fourier Transform Properties~\ref{lem:FTFI}, ~\ref{lem:FTFD} to obtain
 \begin{equation}
 \begin{aligned}
 (\boldsymbol{D}^{-s}\psi, \boldsymbol{D}^{s*}\psi)&=(\widehat{\boldsymbol{D}^{-s}\psi}, \overline{\widehat{\boldsymbol{D}^{s*}\psi}})\\
 &=(\mathcal{F}(\boldsymbol{D}^{-s}\psi), \overline{\mathcal{F}(\boldsymbol{D}^{s*}\psi)})\\
 &=((2\pi i\xi)^{-s}\widehat{\psi}, \overline{(-2\pi i \xi)^{s}\widehat{\psi}}).
 \end{aligned}
 \end{equation}
 Invoking Remark~\ref{rem:ComplexPowerFunctions}, we find
 \begin{equation}
 ((2\pi i\xi)^{-s}\widehat{\psi}, \overline{(-2\pi i \xi)^{s}\widehat{\psi}})=\|\psi\|^2_{L^2(\mathbb{R})}.
 \end{equation}
 For the second equality, similarly we have
 \begin{equation}
 \begin{aligned}
 (\boldsymbol{D}^{-s}\psi, \boldsymbol{D}^{s}\psi)&=(\widehat{\boldsymbol{D}^{-s}\psi}, \overline{\widehat{\boldsymbol{D}^{s}\psi}})\\
 &=(\mathcal{F}(\boldsymbol{D}^{-s}\psi), \overline{\mathcal{F}(\boldsymbol{D}^{s}\psi)})\\
 &=((2\pi i\xi)^{-s}\widehat{\psi}, \overline{(2\pi i \xi)^{s}\widehat{\psi}}).
 \end{aligned}
 \end{equation}
 Again, by invoking Remark~\ref{rem:ComplexPowerFunctions}, this becomes
 \begin{equation}
 \begin{aligned}
 &((2\pi i\xi)^{-s}\widehat{\psi}, \overline{(2\pi i \xi)^{s}\widehat{\psi}})\\
 &=e^{is\pi}\int_{-\infty}^0  (-2\pi \xi)^{-s}\widehat{\psi}\cdot(-2\pi \xi)^s\overline{\widehat{\psi}}\, {\rm d}\xi +e^{-is\pi}\int_{-\infty}^0  (2\pi \xi)^{-s}\widehat{\psi}\cdot(2\pi \xi)^s\overline{\widehat{\psi}}\, {\rm d}\xi\\
 &= e^{is\pi}\int_{-\infty}^0 |\widehat{\psi}|^2\, {\rm d}\xi+e^{-is\pi}\int_{-\infty}^0  |\widehat{\psi}|^2\, {\rm d} \xi\\
 &=\cos(s\pi)\|\psi\|^2_{L^2(\mathbb{R})}.
 \end{aligned}
 \end{equation}
 In the last step we have used the fact that $\overline{\widehat{\psi}(-\xi)}=\widehat{\psi}(\xi)$ for real valued function $\psi$.
 \end{proof}
 \begin{lemma}\label{normestimate}
 Given $0<s<1/2$,  $\psi \in C^\infty_0(\mathbb{R})$, then
 \begin{align}
 \|\boldsymbol{D}^{-s}\psi+\boldsymbol{D}^{s*}\psi\|^2_{L^2(\mathbb{R})} &=\|\boldsymbol{D}^{-s}\psi\|^2_{L^2(\mathbb{R})}+\|\boldsymbol{D}^{s*}\psi\|^2_{L^2(\mathbb{R})}+2\|\psi\|^2_{L^2(\mathbb{R})},\label{euqation1}\\
 \|\boldsymbol{D}^{-s}\psi+\boldsymbol{D}^{s}\psi\|^2_{L^2(\mathbb{R})} &=\|\boldsymbol{D}^{-s}\psi\|^2_{L^2(\mathbb{R})}+\|\boldsymbol{D}^{s}\psi\|^2_{L^2(\mathbb{R})}+2\cos(s\pi)\|\psi\|^2_{L^2(\mathbb{R})}.\label{equation2}
 \end{align}

 \end{lemma}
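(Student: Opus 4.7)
The plan is to recognize that this lemma is essentially an immediate corollary of Lemma~\ref{singlenormestimate} combined with the standard polarization identity in the Hilbert space $L^2(\mathbb{R})$. Since we are dealing with real-valued functions, for any $a,b\in L^2(\mathbb{R})$ we have the expansion
\begin{equation*}
\|a+b\|^2_{L^2(\mathbb{R})}=\|a\|^2_{L^2(\mathbb{R})}+2(a,b)+\|b\|^2_{L^2(\mathbb{R})}.
\end{equation*}

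First I would verify that the pieces involved are all genuinely in $L^2(\mathbb{R})$ so that the expansion above is legitimate. For $\psi\in C_0^\infty(\mathbb{R})$ and $0<s<1/2$, we already argued in the proof of Lemma~\ref{lem:dense} that $\boldsymbol{D}^{-s}\psi\in L^2(\mathbb{R})$ (via Theorem 5.3 of \cite{MR1347689}), and by Property~\ref{prop:Boundedness} both $\boldsymbol{D}^{s*}\psi$ and $\boldsymbol{D}^{s}\psi$ belong to $L^p(\mathbb{R})$ for all $1\le p<\infty$, in particular to $L^2(\mathbb{R})$.

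Next, for the first identity I would apply the expansion with $a=\boldsymbol{D}^{-s}\psi$ and $b=\boldsymbol{D}^{s*}\psi$, and substitute the cross term $(\boldsymbol{D}^{-s}\psi,\boldsymbol{D}^{s*}\psi)=\|\psi\|^2_{L^2(\mathbb{R})}$ supplied by Lemma~\ref{singlenormestimate} to conclude \eqref{euqation1}. For the second identity I would carry out the identical manipulation with $b=\boldsymbol{D}^{s}\psi$, using the second half of Lemma~\ref{singlenormestimate} which gives the cross term $\cos(s\pi)\|\psi\|^2_{L^2(\mathbb{R})}$; the factor of $2$ in front comes from the $2(a,b)$ term.

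There is no real obstacle here since the content of the lemma has already been absorbed into Lemma~\ref{singlenormestimate}; the only thing to be careful about is the well-definedness of the $L^2$ inner product for the cross term, which is precisely why we explicitly recorded the inclusions in $L^2(\mathbb{R})$ above. The proof is therefore short and mechanical.
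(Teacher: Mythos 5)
Your proposal is correct and follows the same route as the paper, which simply cites Lemma~\ref{singlenormestimate} as the source of the cross terms; you have merely spelled out the $L^2$ membership checks and the norm expansion that the paper leaves implicit.
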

 \begin{proof}
This is a direct consequence of  Lemma~\ref{singlenormestimate}.
 \end{proof}
 Now we are in the position to prove Theorem~\ref{MainTheorem1} for $0<s<1/2$.
 \begin{proof}
The proof is shown only for Case (1) in Theorem~\ref{MainTheorem1},  since Case (2) could be established analogously without obstacle. And it will be convenient to keep in mind the fact of Theorem~\ref{thm:EquivalenceOfSpaces} in the following.\\
Step 1.  

Given $0<s<1/2, f\in L^2(\mathbb{R})$. By Lemma~\ref{lem:dense}, there exists a sequence $\{\psi_n\}\subset C^\infty_0(\mathbb{R})$ such that
 \begin{equation}
f= \lim_{n\rightarrow \infty}(\boldsymbol{D}^{-s}\psi_n+\boldsymbol{D}^{s*}\psi_n)\, , \quad \text{  in $L^2(\mathbb{R})$}.
\end{equation} 
It is clear that $\{\boldsymbol{D}^{-s}\psi_n+\boldsymbol{D}^{s*}\psi_n\}$ is a Cauchy in $L^2(\mathbb{R})$. Equation~\eqref{euqation1} implies that $\{\boldsymbol{D}^{-s}\psi_n\}, \{\boldsymbol{D}^{s*}\psi_n\}$ and $\{\psi_n\}$ are Cauchy sequences separately since each term on the right hand side has a positive coefficient, namely, $1, 1, 2$. 
 
 Denote limit function $u=\lim_{n\rightarrow \infty}\psi_n$ and by invoking Corollary~\ref{cor:DensityOfC}, we know $u\in \widehat{H}^s(\mathbb{R})$ and $\lim_{n\rightarrow \infty}\boldsymbol{D}^{s*}\psi_n=\boldsymbol{D}^{s*}u$ in the weak fractional derivative sense. 

Then denote limit function  $v=\lim_{n\rightarrow \infty}\boldsymbol{D}^{-s}\psi_n$, and we claim $v=\boldsymbol{D}^{-s}u$, where $\boldsymbol{D}^{-s}u$ is to be understood in usual sense, namely, R-L integral of $u$ (which is well-defined). To see this,  the condition $0<s<1/2$ allows to use Property~\ref{prp:IntegrationExchange} to obtain
\begin{equation}\label{equ:Weakfractional}
(\boldsymbol{D}^{-s}u,\phi)=(u,\boldsymbol{D}^{-s*}\phi),\quad \forall \phi\in C_0^\infty(\mathbb{R}).
\end{equation}
On the other hand,
\begin{equation}
(v,\phi)=\lim_{n\rightarrow \infty}(\boldsymbol{D}^{-s}\psi_n,\phi)=\lim_{n\rightarrow \infty}(\psi_n,\boldsymbol{D}^{-s*}\phi)=( u,\boldsymbol{D}^{-s*}\phi).
\end{equation}
Thus, $(\boldsymbol{D}^{-s}u-v,\phi)=0, \forall \phi \in C_0^\infty(\mathbb{R})$, which deduces $v=\boldsymbol{D}^{-s}u$ a.e.. And therefore, $f=\boldsymbol{D}^{-s}u+\boldsymbol{D}^{s*}u$.\\
Step 2.

The uniqueness of $u$ is from the norm estimate in Equation~\eqref{euqation1}. Suppose $f=\boldsymbol{D}^{-s}u_1+\boldsymbol{D}^{s*}u_1=\boldsymbol{D}^{-s}u_2+\boldsymbol{D}^{s*}u_2, u_1,u_2\in \widehat{H}^s(\mathbb{R})$, then it is deduced that $\|u_1-u_2\|^2_{L^2(\mathbb{R})}=0$ and thus $u_1=u_2$ in $L^2(\mathbb{R})$.\\
Step 3.

Now suppose $f\in \widehat{H}^t(\mathbb{R})$, where $0<t$, we intend to show  $u\in \widehat{H}^{s+t}(\mathbb{R})$. Let's for now suppose $t\leq 2s$. First note the fact that $\boldsymbol{D}^{-s}u\in \widehat{H}^{2s}(\mathbb{R})$. Actually, using Property~\ref{prp:IntegrationExchange}, which is permissible here, gives
\begin{equation}
(\boldsymbol{D}^{-s}u, \boldsymbol{D}^{2s*}\phi)=(u, \boldsymbol{D}^{-s*}\boldsymbol{D}^{2s*}\phi)=(u,\boldsymbol{D}^{s*}\phi)=(\boldsymbol{D}^{s}u, \phi)\,, \forall \phi \in C_0^\infty(\mathbb{R}).
\end{equation}
The last equality above was by $u\in \widetilde{W}^s_L(\mathbb{R})$.
Therefore, by definition, $\boldsymbol{D}^{-s}u\in \widetilde{W}^{2s}_L(\mathbb{R})$, namely $\boldsymbol{D}^{-s}u\in \widehat{H}^{2s}(\mathbb{R})$ by Theorem~\ref{thm:EquivalenceOfSpaces}.
Then $\boldsymbol{D}^{s*}u=f-\boldsymbol{D}^{-s}u \in\widetilde{W}^t_L(\mathbb{R})$ by noticing our assumption $t\leq 2s$ and the fact that $\widehat{H}^{2s}(\mathbb{R})\subset\widehat{H}^{t}(\mathbb{R})$. Another use of Theorem~\ref{thm:EquivalenceOfSpaces} concludes $\boldsymbol{D}^{s*}u \in \widetilde{W}^t_R(\mathbb{R})$, which implies by definition of weak fractional derivative that
\begin{equation}
(\boldsymbol{D}^t(\boldsymbol{D}^{s*}u), \phi)=(\boldsymbol{D}^{s*}u, \boldsymbol{D}^t\phi), \quad \forall \phi \in C_0^\infty(\mathbb{R}).
\end{equation}
Observe that 
\begin{equation}
(\boldsymbol{D}^{s*}u, \boldsymbol{D}^t\phi)=\lim_{n\rightarrow \infty}(\boldsymbol{D}^{s*}\psi_n, \boldsymbol{D}^t\phi)=\lim_{n\rightarrow \infty}(\psi_n, \boldsymbol{D}^{s+t}\phi), \quad \forall \phi\in C_0^\infty(\mathbb{R}),
\end{equation}
which concludes $u\in \widetilde{W}^{s+t}_R(\mathbb{R})$, namely $u\in \widehat{H}^{s+t}(\mathbb{R})$.
Thus we actually raise the regularity of $u$ to $ \widehat{H}^{s+t}(\mathbb{R})$ from $\widehat{H}^s(\mathbb{R})$. 

For $t>2s$, we just need to rewrite $t=2s+\boldsymbol{Residue}$, and repeat above procedure to raise the regularity of $u$ from $\widehat{H}^s(\mathbb{R})$ to $\widehat{H}^{2s}(\mathbb{R})$, and repeat the same procedure again for $\boldsymbol{Residue}$, all the way to $\widehat{H}^{s+t}(\mathbb{R})$. \\
Step 4.

Now suppose $u\in \widehat{H}^{s+t}(\mathbb{R})$, where $0<t$, we intend to show $f\in \widehat{H}^t(\mathbb{R})$. By definition, it is easy to verify that $f- \boldsymbol{D}^{-s}u=\boldsymbol{D}^{s*}u\in \widetilde{W}^t_R(\mathbb{R})$, and $\boldsymbol{D}^{-s}u \in \widetilde{W}^{2s+t}_L(\mathbb{R})$, therefore, $\boldsymbol{D}^{-s}u+\boldsymbol{D}^{s*}u=f \in \widehat{H}^t(\mathbb{R})$ by using Theorem~\ref{thm:EquivalenceOfSpaces} and the fact that $\widehat{H}^{t_1}(\mathbb{R})\subset\widehat{H}^{t_2}(\mathbb{R})$ for $t_1\geq t_2$.
This completes the  proof for the case $0<s<1/2$. 

As mentioned, the case $-1/2< s\leq 0$ could be established analogously which completes the whole proof of Theorem~\ref{MainTheorem1}.
 \end{proof}
Based on Theorem~\ref{MainTheorem1}, the Fourier transform of $f\in L^2(\mathbb{R})$ therefore could be characterized as follows:
 \begin{corollary}
(1).
 Given $f\in L^2(\mathbb{R})$, $-1/2<s<1/2$, then there is a unique $u\in \widehat{H}^s(\mathbb{R})$ such that
 \begin{equation} 
 \widehat{f}(\xi)=\left((2\pi i \xi)^{-s}+(-2\pi i \xi)^s\right) \widehat{u}(\xi).
 \end{equation}

 (2).
 Given $f\in L^2(\mathbb{R})$, $-1/2<s<1/2$, then there is a unique $u\in \widehat{H}^s(\mathbb{R})$ such that
 \begin{equation} 
 \widehat{f}(\xi)=\left((2\pi i \xi)^{-s}+(2\pi i \xi)^s\right) \widehat{u}(\xi).
 \end{equation}

 The complex power functions are understood as $(\mp i\xi)^{\sigma}=|\xi|^\sigma e^{\mp  \sigma \pi i \cdot \emph{sign} (\xi)/2}$.
 \end{corollary}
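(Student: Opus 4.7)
The plan is to read this corollary as simply the Fourier-side reformulation of Theorem~\ref{MainTheorem1}. Given $f\in L^2(\mathbb{R})$ and $-1/2<s<1/2$, Theorem~\ref{MainTheorem1}(1) supplies the unique $u\in \widehat{H}^{|s|}(\mathbb{R})$ with $f=\boldsymbol{D}^{-s}u+\boldsymbol{D}^{s*}u$, and part (2) supplies the analogous decomposition $f=\boldsymbol{D}^{-s}u+\boldsymbol{D}^{s}u$. All that remains is to verify that taking the Plancherel transform of each identity yields the claimed symbol formula; uniqueness of $u$ will then be inherited directly from Theorem~\ref{MainTheorem1}.

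First I would invoke Corollary~\ref{cor:DensityOfC} to pick $\{\psi_n\}\subset C_0^\infty(\mathbb{R})$ with $\psi_n\to u$ in $L^2$ and $\boldsymbol{D}^{s*}\psi_n\to\boldsymbol{D}^{s*}u$ in $L^2$; from Step 1 of the proof of Theorem~\ref{MainTheorem1}, one additionally has $\boldsymbol{D}^{-s}\psi_n\to\boldsymbol{D}^{-s}u$ in $L^2$. For each $\psi_n$, which lies in $L^1\cap L^2$ so that $\mathcal{F}(\psi_n)$ agrees with $\widehat{\psi_n}$, Properties~\ref{lem:FTFI} and~\ref{lem:FTFD} give the pointwise (a.e.\ in $\xi$) identity
\begin{equation*}
\widehat{\boldsymbol{D}^{-s}\psi_n+\boldsymbol{D}^{s*}\psi_n}(\xi)=\bigl((2\pi i\xi)^{-s}+(-2\pi i\xi)^{s}\bigr)\,\widehat{\psi_n}(\xi).
\end{equation*}

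Next I would pass to the limit. Because the Plancherel transform is an $L^2$-isometry, $\widehat{\psi_n}\to\widehat{u}$, $\widehat{\boldsymbol{D}^{-s}\psi_n}\to\widehat{\boldsymbol{D}^{-s}u}$, $\widehat{\boldsymbol{D}^{s*}\psi_n}\to\widehat{\boldsymbol{D}^{s*}u}$, and $\widehat{\boldsymbol{D}^{-s}\psi_n+\boldsymbol{D}^{s*}\psi_n}\to\widehat{f}$, all in $L^2(\mathbb{R})$. Extracting a subsequence so that each convergence is also pointwise a.e., one can identify the limit products: $(2\pi i\xi)^{-s}\widehat{\psi_n}\to(2\pi i\xi)^{-s}\widehat{u}$ and $(-2\pi i\xi)^{s}\widehat{\psi_n}\to(-2\pi i\xi)^{s}\widehat{u}$ pointwise a.e., so these two functions coincide a.e.\ with $\widehat{\boldsymbol{D}^{-s}u}$ and $\widehat{\boldsymbol{D}^{s*}u}$ respectively; adding gives the claim. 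Part (2) is handled identically, replacing $(-2\pi i\xi)^{s}$ by $(2\pi i\xi)^{s}$ and $\boldsymbol{D}^{s*}u$ by $\boldsymbol{D}^{s}u$.

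The only genuine subtlety is that the multiplier $(2\pi i\xi)^{-s}+(-2\pi i\xi)^{s}$ is unbounded near $\xi=0$ when $s>0$ (and near $\pm\infty$ when $s<0$), so the stated identity cannot be read as an $L^\infty$ multiplier acting on an $L^2$ function. My way of finessing this is to never form the multiplier against $\widehat{u}$ directly; instead, each product $(2\pi i\xi)^{-s}\widehat{u}$ and $(-2\pi i\xi)^{s}\widehat{u}$ is \emph{a posteriori} well-defined as an element of $L^2$ precisely because $\boldsymbol{D}^{-s}u$ and $\boldsymbol{D}^{s*}u$ belong to $L^2$ by Theorem~\ref{MainTheorem1}, so the limiting products pick up meaning as Plancherel transforms rather than as pointwise operations on $\widehat{u}$.
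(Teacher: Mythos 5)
Your proposal is correct and follows essentially the same route as the paper: both take the approximating sequence $\{\psi_n\}$ from the proof of Theorem~\ref{MainTheorem1}, apply Properties~\ref{lem:FTFI} and~\ref{lem:FTFD} to each $\psi_n$, and identify the limits by combining $L^2$ convergence under the Plancherel isometry with pointwise a.e.\ convergence along a subsequence (Theorem~\ref{the:ConvergesInLp}). Your closing remark about why the unbounded multiplier is nonetheless meaningful is a useful clarification that the paper leaves implicit, but it does not change the argument.
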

 \begin{proof}
 Again, the proof is shown only for the case $0<s<1/2$ and part (1), the case $-1/2<s\leq 0$ and part (2) could be established analogously without  essential differences.
 
 Fix $f\in L^2(\mathbb{R})$, from Theorem~\ref{MainTheorem1} and the proof, there is a unique $u\in \widehat{H}^s(\mathbb{R})$ and a Cauchy sequence $\{\psi_n\}\subset C_0^\infty(\mathbb{R})$ such that
 \begin{equation}
 f=\boldsymbol{D}^{-s}u+\boldsymbol{D}^{s*}u,
 \end{equation}
 and 
 \begin{equation}
 \lim_{n\rightarrow \infty} \boldsymbol{D}^{-s}\psi_n=\boldsymbol{D}^{-s}u, \quad \lim_{n\rightarrow \infty}\boldsymbol{D}^{s*}\psi_n=\boldsymbol{D}^{s*}u, \quad \lim_{n\rightarrow \infty}\psi_n=u,\quad \text{in $L^2(\mathbb{R})$.}
 \end{equation}
 \end{proof}
Then we know
\begin{equation}
\widehat{f}=\widehat{\boldsymbol{D}^{-s}u}+\widehat{\boldsymbol{D}^{s*}u}\,, \text{and} \, \{\widehat{\boldsymbol{D}^{-s}\psi_n}\}\,,\{\widehat{\boldsymbol{D}^{s*}\psi_n}\}\,,\{\widehat{\psi_n}\} \,\text{ converge in $L^2(\mathbb{R})$}.
\end{equation}
On one hand, there is a subsequence $\{\widehat{\psi_n}_{i}\}$ that converge pointwise almost everywhere to $\widehat{u}$ (\ref{the:ConvergesInLp}), therefore $(2\pi i \xi)^{-s}\widehat{\psi_n}_i$ converges pointwise to $(2\pi i \xi)^{-s}\widehat{u}$ a.e. and $(-2\pi i \xi)^{s}\widehat{\psi_n}_i$ converges pointwise to $(-2\pi i \xi)^{-s}\widehat{u}$ a.e.
On the other hand, in $L^2(\mathbb{R})$
\begin{equation}
\begin{aligned}
\widehat{\boldsymbol{D}^{-s}u}&=\lim_{n\rightarrow \infty}\widehat{\boldsymbol{D}^{-s}{\psi_n}_i}=\lim_{n\rightarrow \infty}(2\pi i \xi)^{-s}\widehat{{\psi_n}_i}\quad,\\
\widehat{\boldsymbol{D}^{s*}u}&=\lim_{n\rightarrow \infty}\widehat{\boldsymbol{D}^{s*}{\psi_n}_i}=\lim_{n\rightarrow \infty}(-2\pi i \xi)^{s}\widehat{{\psi_n}_i}\quad.
\end{aligned}
\end{equation}
 Therefore it is easy to see
 \begin{equation}
 \widehat{\boldsymbol{D}^{-s}u}=(2\pi i \xi)^{-s}\widehat{u}\, \,\text{a.e.},\quad \widehat{\boldsymbol{D}^{s*}u}=(-2\pi i \xi)^{-s}\widehat{u} \,\,\text{a.e..}
 \end{equation}
 And thus
 \begin{equation}
 \widehat{f}(\xi)=\left((2\pi i \xi)^{-s}+(-2\pi i \xi)^s\right) \widehat{u}(\xi).
 \end{equation}
 This completes the whole proof.
 %
 %
 \section{Conclusion}\label{section4}
 We have constructed a bunch of maps $T_s: f \mapsto u$ from $L^2(\mathbb{R})$ to $\widehat{H}^s(\mathbb{R})$ in Theorem~\ref{MainTheorem1}, which depends on $s$. Naturally interesting questions would be`` Is this map onto? what is the maximum of $|s|$?'', and furthermore,``what are the potential algebraic structures underlying these operators?"
\appendix
\renewcommand{\theequation}{\thesection.\arabic{equation}}
\numberwithin{equation}{section}
\section{Several Definitions and Theorems}

\begin{definition}[Sobolev Spaces Via Fourier Transform]\label{thm:FTHsR}
Let $\mu\geq 0$. Define
\begin{equation}
\widehat{H}^\mu(\mathbb{R}) = \left \{ w \in L^2(\mathbb{R}) : \int_{\mathbb{R}} (1 + |2\pi\xi|^{2\mu}) |\widehat{w}(\xi) |^2 \, {\rm d} \xi < \infty \right \},
\end{equation}
where $\widehat{w}$ is Plancherel  transform defined in Theorem~\ref{thm:PAR}.
The space is endowed with semi-morn 
\begin{equation}
|u|_{\widehat{H}^\mu(\mathbb{R})}:=\||2\pi\xi|^\mu \widehat{u}\|_{L^2(\mathbb{R})},
\end{equation}
and norm
\begin{equation}
\|u\|_{\widehat{H}^\mu (\mathbb{R})}:=\left(\|u\|^2_{L^2(\mathbb{R})} +|u|^2_{\widehat{H}^\mu(\mathbb{R})}\right)^{1/2}.
\end{equation}
\end{definition}
\noindent
And it is well-known that $\widehat{H}^\mu(\mathbb{R})$ is a Hilbert space. 
\begin{definition}[Fourier Transform] \label{def:FT}
Given a function $f: \mathbb{R} \rightarrow \mathbb{R}$, the Fourier Transform of $f$ is defined as
$$
\mathcal{F}(f)(\xi):=\int_{-\infty}^{\infty}e^{-2\pi i x\xi}f(x)~ {\rm d}x \quad \forall \xi \in \mathbb{R}.
$$
\end{definition}
\begin{theorem}[Plancherel Theorem ( \cite{MR924157} p. 187)] \label{thm:PAR}

One can associate to each $f\in L^2(\mathbb{R})$ a function $\widehat{f}\in L^2(\mathbb{R})$ so that the following properties hold:
 \begin{itemize}
 \item If $f\in L^1(\mathbb{R})\cap L^2(\mathbb{R})$, then $\widehat{f} $ is the defined Fourier transform of $f$ in Definition \ref{def:FT}.
 \item For every $f\in L^2(\mathbb{R})$, $\|f\|_2=\|\widehat{f}\|_2$.
 \item The mapping $f\rightarrow \widehat{f}$ is a Hilbert space isomorphism of $L^2(\mathbb{R})$ onto $L^2(\mathbb{R})$.
 \end{itemize}
\end{theorem}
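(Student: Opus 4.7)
The plan is to prove Plancherel's theorem by first establishing the Parseval--Plancherel identity on a dense subspace of $L^2(\mathbb{R})$ on which the Fourier integral converges classically, then extending the Fourier transform to all of $L^2(\mathbb{R})$ by isometry and continuity, and finally obtaining surjectivity from density of the image.

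Step one is to verify that $\|\mathcal{F}(\varphi)\|_{L^2} = \|\varphi\|_{L^2}$ for every $\varphi$ in a convenient dense subclass, for instance the Schwartz class $\mathcal{S}(\mathbb{R})$ (which contains $C_0^\infty(\mathbb{R})$). For $\varphi \in \mathcal{S}(\mathbb{R})$ the integral in Definition~\ref{def:FT} is absolutely convergent and $\mathcal{F}(\varphi)$ is again a Schwartz function. The key ingredient is the Fourier inversion formula
\begin{equation}
\varphi(x) = \int_{\mathbb{R}} \mathcal{F}(\varphi)(\xi)\, e^{2\pi i x \xi}\, {\rm d}\xi,
\end{equation}
which I would prove by the standard Gaussian summability argument: insert the convergence factor $e^{-\epsilon \pi \xi^2}$, use the self-duality $\mathcal{F}(e^{-\pi x^2}) = e^{-\pi \xi^2}$ together with Fubini to rewrite the regularized inverse transform as convolution of $\varphi$ against a Gaussian approximate identity, and let $\epsilon \to 0^+$. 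Substituting the inversion formula into $\int \varphi\, \overline{\psi}$ and applying Fubini yields Parseval's identity $(\varphi, \psi) = (\mathcal{F}(\varphi), \mathcal{F}(\psi))$ for $\varphi, \psi \in \mathcal{S}(\mathbb{R})$, and the choice $\psi = \varphi$ produces the isometry on $\mathcal{S}(\mathbb{R})$.

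Step two is the continuity extension. For $f \in L^2(\mathbb{R})$ pick a sequence $\{\varphi_n\} \subset C_0^\infty(\mathbb{R})$ with $\varphi_n \to f$ in $L^2$; the isometry from step one forces $\{\mathcal{F}(\varphi_n)\}$ to be Cauchy in $L^2(\mathbb{R})$ and allows me to define $\widehat{f}$ as its $L^2$ limit, independently of the approximating sequence. The identity $\|f\|_2 = \|\widehat{f}\|_2$ then passes to the limit, establishing the second bullet. For the first bullet, given $f \in L^1(\mathbb{R}) \cap L^2(\mathbb{R})$ choose $\{\varphi_n\} \subset C_0^\infty(\mathbb{R})$ with $\varphi_n \to f$ in both $L^1$ and $L^2$ norms; then $\mathcal{F}(\varphi_n) \to \mathcal{F}(f)$ uniformly (hence pointwise everywhere) while simultaneously $\mathcal{F}(\varphi_n) \to \widehat{f}$ in $L^2$, so passing to an a.e.-convergent subsequence forces $\widehat{f} = \mathcal{F}(f)$ almost everywhere.

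For the final Hilbert-space isomorphism statement, linearity of $f \mapsto \widehat{f}$ is immediate from the definition as a continuous linear limit, and injectivity follows from the isometry. The range is automatically closed because isometries have closed image. To see the range is all of $L^2(\mathbb{R})$, note that $\mathcal{F}$ maps $\mathcal{S}(\mathbb{R})$ bijectively onto itself (another consequence of inversion, since both $\mathcal{F}$ and its formal inverse preserve $\mathcal{S}$); consequently the range of the extended map contains all of $\mathcal{S}(\mathbb{R})$, which is dense in $L^2(\mathbb{R})$, so the closed range is everything. I expect the most delicate step to be the Fourier inversion formula on Schwartz functions, since that is where the only genuine analysis (the Gaussian approximate identity computation) enters; the remainder of the argument is a routine density-and-continuity extension together with an isometry-plus-dense-range surjectivity argument.
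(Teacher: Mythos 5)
This statement is quoted in the paper as a classical background result (Rudin, \emph{Real and Complex Analysis}, p.~187) and is not proved there, so there is no internal proof to compare against; your proposal is a correct, self-contained proof. Your route --- Fourier inversion on the Schwartz class via the Gaussian convergence factor $e^{-\epsilon\pi\xi^2}$, Parseval by Fubini, extension to $L^2(\mathbb{R})$ by isometry and density, consistency with the $L^1$ transform via a sequence converging in both $L^1$ and $L^2$ plus an a.e.\ convergent subsequence, and surjectivity from $\mathcal{F}(\mathcal{S}(\mathbb{R}))=\mathcal{S}(\mathbb{R})$ together with the closed range of an isometry --- is the standard ``Schwartz class'' proof found in Stein--Shakarchi or Folland. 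The cited source proves the same theorem by a genuinely different argument: Rudin works directly with $f\in L^1(\mathbb{R})\cap L^2(\mathbb{R})$, forms the autocorrelation $g=f*\tilde f$ with $\tilde f(x)=\overline{f(-x)}$, notes that $\mathcal{F}(g)=|\mathcal{F}(f)|^2\ge 0$ and that $g$ is continuous and bounded, and applies the inversion theorem at the origin to get $\|f\|_2^2=\|\mathcal{F}(f)\|_2^2$ before extending by density. Your approach requires developing the Schwartz-space calculus but makes the inversion formula and the bijectivity of $\mathcal{F}$ on $\mathcal{S}(\mathbb{R})$ explicit (which is what delivers surjectivity cleanly); Rudin's approach avoids $\mathcal{S}(\mathbb{R})$ entirely at the cost of a slightly more delicate positivity/continuity argument for the autocorrelation. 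Two small points worth making explicit in a final write-up: the existence of $\varphi_n\to f$ simultaneously in $L^1$ and $L^2$ (truncate, then mollify), and the polarization step upgrading the norm isometry to a Hilbert-space isomorphism. Neither is a gap.
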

\begin{theorem}(\cite{MR2597943}, p. 189 )\label{thm:ParsevalFormula}
Assume $u,v\in L^2(\mathbb{R}^n)$. Then
\begin{itemize}
\item $\displaystyle \int_{\mathbb{R}^n} u\overline{v}=\int_{\mathbb{R}^n} \widehat{u}\overline{\widehat{v}}.$
\item $u=(\widehat{u})^{\vee}.$
\end{itemize}
\end{theorem}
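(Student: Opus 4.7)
The plan is to derive both claims from the Plancherel isometry already stated in Theorem~\ref{thm:PAR}, using polarization for the Parseval identity and a density argument for the inversion formula.

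For the first identity, my approach is the complex polarization identity. Since Theorem~\ref{thm:PAR} asserts that $f\mapsto\widehat{f}$ is a linear Hilbert space isomorphism of $L^2(\mathbb{R}^n)$ with $\|f\|_2=\|\widehat{f}\|_2$, I would apply this norm equality to the four combinations $u\pm v$ and $u\pm iv$ and combine them via
\[
4\langle f,g\rangle=\|f+g\|_2^2-\|f-g\|_2^2+i\|f+ig\|_2^2-i\|f-ig\|_2^2.
\]
The linearity half of Theorem~\ref{thm:PAR} lets me push the $\pm$, $\pm i$ across the hat, so the right hand side of the polarization identity applied to $(\widehat{u},\widehat{v})$ has the same four $L^2$ norms. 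Hence $\langle u,v\rangle_{L^2}=\langle\widehat{u},\widehat{v}\rangle_{L^2}$, which is $\int u\overline{v}=\int\widehat{u}\overline{\widehat{v}}$.

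For the inversion $u=(\widehat{u})^{\vee}$, I would proceed by density. On the Schwartz class $\mathcal{S}(\mathbb{R}^n)\subset L^1(\mathbb{R}^n)\cap L^2(\mathbb{R}^n)$ both the Fourier integral and its formal inverse converge absolutely, and the classical pointwise Fourier inversion theorem, established by convolving with a Gaussian approximate identity and applying dominated convergence, gives $u(x)=\int_{\mathbb{R}^n}e^{2\pi i x\cdot\xi}\widehat{u}(\xi)\,{\rm d}\xi$ for $u\in\mathcal{S}(\mathbb{R}^n)$. Both the identity map and $u\mapsto(\widehat{u})^{\vee}$ are continuous on $L^2(\mathbb{R}^n)$, the latter because it is a composition of two Plancherel isometries from Theorem~\ref{thm:PAR}; since they coincide on the dense subspace $\mathcal{S}(\mathbb{R}^n)$, they coincide on all of $L^2(\mathbb{R}^n)$.

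The main obstacle is bookkeeping about the definitions of $\widehat{u}$ and $(\cdot)^{\vee}$: for general $u\in L^2(\mathbb{R}^n)$ neither object is given by an absolutely convergent integral, so one cannot manipulate them via Fubini directly. Theorem~\ref{thm:PAR} resolves this by defining $\widehat{u}$ as the $L^2$-limit of $\widehat{u_k}$ for any approximating sequence $u_k\in L^1\cap L^2$, and the analogous construction with $e^{+2\pi ix\cdot\xi}$ replacing $e^{-2\pi ix\cdot\xi}$ defines $(\cdot)^{\vee}$. Once both Parseval and inversion are verified on $\mathcal{S}(\mathbb{R}^n)$, the $L^2$-continuity of all maps involved then extends them to $L^2(\mathbb{R}^n)$ with no further analytic work.
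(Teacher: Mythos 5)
This statement is quoted in the paper's appendix directly from Evans (\cite{MR2597943}, p.~189) and is given no proof there, so there is no in-paper argument to compare against. Your proposal is correct and is essentially the textbook proof: the Parseval identity follows from the Plancherel isometry of Theorem~\ref{thm:PAR} by complex polarization (your identity $4\langle f,g\rangle=\|f+g\|_2^2-\|f-g\|_2^2+i\|f+ig\|_2^2-i\|f-ig\|_2^2$ checks out with the convention $\langle f,g\rangle=\int f\overline{g}$, and linearity of the transform lets you move the scalars across the hat), and the inversion formula follows by verifying it on the Schwartz class and extending by $L^2$-continuity of the two isometries to the dense subspace's closure. The only bookkeeping caveats are that Theorem~\ref{thm:PAR} is stated for $n=1$ while the claim is on $\mathbb{R}^n$ (the Plancherel theorem holds verbatim in $\mathbb{R}^n$, so this is harmless), and that one must take $(\cdot)^{\vee}$ to mean the $L^2$-extension of the conjugate Fourier integral rather than the paper's circular phrasing ``inverse of the Plancherel transform''; you address the latter point explicitly.
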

\begin{theorem}[\cite{MR2328004}, Theorem 4.3-2, p. 191]\label{Thm:dense}
Let $(X,(\cdot,\cdot))$ be a Hilbert space and let $Y$ be a subspace of $X$, then $\overline{Y}=X$ if and only if element $x\in X$ that satisfy $(x,y)=0$ for all $y\in Y$ is $x=0$.
\end{theorem}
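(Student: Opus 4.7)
The statement is the standard Hilbert-space density criterion: a subspace $Y$ of a Hilbert space $X$ is dense if and only if $Y^{\perp}=\{0\}$. My plan is to prove each direction separately, with the forward implication resting only on continuity of the inner product and the reverse implication relying on the orthogonal projection / decomposition theorem for closed subspaces of a Hilbert space.

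For the forward direction, I assume $\overline{Y}=X$ and take any $x\in X$ with $(x,y)=0$ for every $y\in Y$. Given an arbitrary $z\in X$, density provides a sequence $\{y_n\}\subset Y$ with $y_n\to z$, and the Cauchy--Schwarz inequality (equivalently, joint continuity of the inner product) gives $(x,z)=\lim_{n\to\infty}(x,y_n)=0$. Specializing to $z=x$ yields $\|x\|^2=(x,x)=0$, so $x=0$. This part is routine and I do not anticipate any obstacle.

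For the reverse direction, I assume that $(x,y)=0$ for all $y\in Y$ forces $x=0$, and I wish to conclude $\overline{Y}=X$. The closure $\overline{Y}$ is a closed subspace of $X$, so the Hilbert-space projection theorem yields the orthogonal decomposition $X=\overline{Y}\oplus (\overline{Y})^{\perp}$. The key observation is that $(\overline{Y})^{\perp}=Y^{\perp}$: the inclusion $(\overline{Y})^{\perp}\subseteq Y^{\perp}$ is immediate because $Y\subseteq \overline{Y}$, and the reverse inclusion follows because if $x\in Y^{\perp}$ and $z\in \overline{Y}$ with $y_n\to z$, $y_n\in Y$, then $(x,z)=\lim (x,y_n)=0$ by the same continuity argument as in Step~1. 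By hypothesis $Y^{\perp}=\{0\}$, hence $(\overline{Y})^{\perp}=\{0\}$, and the decomposition collapses to $X=\overline{Y}$.

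The only non-trivial ingredient is the orthogonal decomposition theorem invoked in the reverse direction, which is the point where completeness of $X$ is actually used (and which distinguishes the Hilbert-space setting from a general inner-product space). Everything else is essentially a two-line continuity argument for the inner product, so I expect no real obstacles; the main conceptual step is recognizing that one must replace $Y$ by its closure before applying projection, and then verifying $Y^{\perp}=(\overline{Y})^{\perp}$ to translate the hypothesis into a statement about the closed subspace $\overline{Y}$.
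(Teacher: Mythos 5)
Your proof is correct and is the standard argument: the forward direction by continuity of the inner product, and the reverse direction by applying the orthogonal decomposition $X=\overline{Y}\oplus(\overline{Y})^{\perp}$ together with the identification $(\overline{Y})^{\perp}=Y^{\perp}$. Note that the paper itself offers no proof of this statement --- it is quoted verbatim from Tartar's book (Theorem 4.3-2, p.~191) as a background tool in the appendix --- so there is nothing to compare against; your argument is the canonical one and correctly isolates where completeness of $X$ is used.
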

%
%
%
%
%
\begin{theorem}(\cite{MR1157815})\label{the:ConvergesInLp}
If $1\leq p\leq \infty$ and if $f_n$ is a Cauchy sequence in $L^p(\mathbb{R})$ with limit $f$, then $\{f_n\}$ has a subsequence which converges pointwise almost everywhere to $f(x)$.
\end{theorem}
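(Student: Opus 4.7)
The plan is to extract from $\{f_n\}$ a subsequence converging rapidly enough in $L^p$-norm that its telescoping differences become absolutely summable pointwise almost everywhere. This forces pointwise convergence on a full-measure set, and a Fatou-type argument then identifies the pointwise limit with $f$. I would treat $1\le p<\infty$ as the main case and dispose of $p=\infty$ at the end.

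First I would use the Cauchy property of $\{f_n\}$ in $L^p(\mathbb{R})$ to select indices $n_1<n_2<\cdots$ with $\|f_{n_{k+1}}-f_{n_k}\|_{L^p(\mathbb{R})}<2^{-k}$ for every $k\geq 1$. Then I would define the nondecreasing sequence of nonnegative measurable functions $G_N(x)=\sum_{k=1}^{N}|f_{n_{k+1}}(x)-f_{n_k}(x)|$ together with its pointwise limit $G(x)=\sum_{k=1}^{\infty}|f_{n_{k+1}}(x)-f_{n_k}(x)|\in[0,\infty]$. Minkowski's inequality gives $\|G_N\|_{L^p(\mathbb{R})}<\sum_{k=1}^{N}2^{-k}<1$ uniformly in $N$, and the monotone convergence theorem applied to $G_N^p\uparrow G^p$ yields $\|G\|_{L^p(\mathbb{R})}\leq 1$; in particular $G<\infty$ almost everywhere.

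Next, on the full-measure set where $G<\infty$, the telescoping series $f_{n_1}(x)+\sum_{k\geq 1}(f_{n_{k+1}}(x)-f_{n_k}(x))$ converges absolutely, and its $N$-th partial sum equals $f_{n_{N+1}}(x)$. Hence $f_{n_k}(x)$ converges pointwise a.e.\ to some finite function $\tilde f(x)$. Since the subsequence $\{f_{n_k}\}$ inherits convergence to $f$ in $L^p$, I would invoke Fatou's lemma to obtain $\int_\mathbb{R}|\tilde f-f|^p\,{\rm d}x\leq\liminf_{k\to\infty}\int_\mathbb{R}|f_{n_k}-f|^p\,{\rm d}x=0$, whence $\tilde f=f$ a.e., and $\{f_{n_k}\}$ converges to $f$ pointwise almost everywhere.

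The remaining case $p=\infty$ is immediate: convergence in $L^\infty(\mathbb{R})$ furnishes a common null set $E$ off which $\sup_{x\notin E}|f_n(x)-f(x)|\to 0$, so the full sequence already converges uniformly, hence pointwise, off $E$, and any subsequence works. The main obstacle is really only the passage from $L^p$-norm control to pointwise a.e.\ convergence; the Minkowski together with monotone convergence combination in the second step is the crux, after which the identification $\tilde f=f$ via Fatou is essentially automatic.
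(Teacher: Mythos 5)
Your proof is correct. The paper does not prove this statement at all---it is quoted in the appendix as a known result with a citation to Rudin---and your argument (rapidly convergent subsequence, Minkowski plus monotone convergence to get a.e.\ finiteness of the telescoping majorant $G$, then Fatou to identify the pointwise limit with the $L^p$-limit $f$, with the trivial separate treatment of $p=\infty$) is exactly the standard Riesz--Fischer proof found in the cited reference, carried out without gaps.
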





 \bigskip \smallskip

 \it

 \noindent
$^1$Department of Mathematics and Statistics \\
\phantom{$^1$}University of Wyoming \\
\phantom{$^1$}1000 E. University Avenue \\
\phantom{$^1$}Dept. 3036, Laramie, Wyoming, USA  \\
\phantom{$^1$}e-mail: liyulong0807101@gmail.com(yli25@uwyo.edu)\\
\phantom{$^1$}Received: June 27, 2018 \\[12pt]


\begin{thebibliography}{1}
\expandafter\ifx\csname url\endcsname\relax
  \def\url#1{\texttt{#1}}\fi
\expandafter\ifx\csname urlprefix\endcsname\relax\def\urlprefix{URL }\fi
\expandafter\ifx\csname href\endcsname\relax
  \def\href#1#2{#2} \def\path#1{#1}\fi

\bibitem{MR1347689}
S.~G. Samko, A.~A. Kilbas, O.~I. Marichev, Fractional integrals and
  derivatives, Gordon and Breach Science Publishers, Yverdon, 1993.

\bibitem{V.Ging&Y.Li}
V.~Ginting, Y.~Li, On fractional diffusion-advection-reaction equation in
  $\mathbb{R}$ arXiv:1805.09398v1.

\bibitem{MR2218073}
A.~A. Kilbas, H.~M. Srivastava, J.~J. Trujillo, Theory and applications of
  fractional differential equations, Vol. 204 of North-Holland Mathematics
  Studies, Elsevier Science B.V., Amsterdam, 2006.

\bibitem{MR924157}
W.~Rudin, Real and complex analysis, 3rd Edition, McGraw-Hill Book Co., New
  York, 1987.

\bibitem{MR2597943}
L.~C. Evans, Partial differential equations, 2nd Edition, Vol.~19 of Graduate
  Studies in Mathematics, American Mathematical Society, Providence, RI, 2010.

\bibitem{MR2328004}
L.~Tartar, An introduction to {S}obolev spaces and interpolation spaces, Vol.~3
  of Lecture Notes of the Unione Matematica Italiana, Springer, Berlin; UMI,
  Bologna, 2007.

\bibitem{MR1157815}
W.Rudin, Functional analysis, 2nd Edition, International Series in Pure and
  Applied Mathematics, McGraw-Hill, Inc., New York, 1991.

\end{thebibliography}
\end{document}